\def\Th1{\varTheta}
\begin{document}

\newtheorem{theorem}{Theorem}
\newtheorem{lemma}[theorem]{Lemma}
\newtheorem{claim}[theorem]{Claim}
\newtheorem{cor}[theorem]{Corollary}
\newtheorem{conj}[theorem]{Conjecture}
\newtheorem{prop}[theorem]{Proposition}
\newtheorem{definition}[theorem]{Definition}
\newtheorem{question}[theorem]{Question}
\newtheorem{example}[theorem]{Example}
\newcommand{\hh}{{{\mathrm h}}}
\newtheorem{remark}[theorem]{Remark}

\numberwithin{equation}{section}
\numberwithin{theorem}{section}
\numberwithin{table}{section}
\numberwithin{figure}{section}

\def\sssum{\mathop{\sum\!\sum\!\sum}}
\def\ssum{\mathop{\sum\ldots \sum}}
\def\iint{\mathop{\int\ldots \int}}

\newcommand{\diam}{\operatorname{diam}}

\def\squareforqed{\hbox{\rlap{$\sqcap$}$\sqcup$}}
\def\qed{\ifmmode\squareforqed\else{\unskip\nobreak\hfil
\penalty50\hskip1em \nobreak\hfil\squareforqed
\parfillskip=0pt\finalhyphendemerits=0\endgraf}\fi}

\newfont{\teneufm}{eufm10}
\newfont{\seveneufm}{eufm7}
\newfont{\fiveeufm}{eufm5}
%
%
\newfam\eufmfam
     \textfont\eufmfam=\teneufm
\scriptfont\eufmfam=\seveneufm
     \scriptscriptfont\eufmfam=\fiveeufm
%
%
\def\frak#1{{\fam\eufmfam\relax#1}}

\newcommand{\bflambda}{{\boldsymbol{\lambda}}}
\newcommand{\bfmu}{{\boldsymbol{\mu}}}
\newcommand{\bfxi}{{\boldsymbol{\eta}}}
\newcommand{\bfrho}{{\boldsymbol{\rho}}}

\def\eps{\varepsilon}

\def\fK{\mathfrak K}
\def\fT{\mathfrak{T}}
\def\fL{\mathfrak L}
\def\fR{\mathfrak R}

\def\fA{{\mathfrak A}}
\def\fB{{\mathfrak B}}
\def\fC{{\mathfrak C}}
\def\fM{{\mathfrak M}}
\def\fS{{\mathfrak  S}}
\def\fU{{\mathfrak U}}
\def\fW{{\mathfrak W}}

\def\T {\mathsf {T}}
\def\Tor{\mathsf{T}_d}
\def\Tore{\widetilde{\mathrm{T}}_{d} }

\def\sM {\mathsf {M}}

\def\ss{\mathsf {s}}

\def\Kmnd{\cK_d(m,n)}
\def\Kmnp{\cK_p(m,n)}
\def\Kmnq{\cK_q(m,n)}

\def \balpha{\bm{\alpha}}
\def \bbeta{\bm{\beta}}
\def \bgamma{\bm{\gamma}}
\def \bdelta{\bm{\delta}}
\def \bzeta{\bm{\zeta}}
\def \blambda{\bm{\lambda}}
\def \bchi{\bm{\chi}}
\def \bphi{\bm{\varphi}}
\def \bpsi{\bm{\psi}}
\def \bnu{\bm{\nu}}
\def \bomega{\bm{\omega}}

\def \bell{\bm{\ell}}

\def\eqref#1{(\ref{#1})}

\def\vec#1{\mathbf{#1}}

\newcommand{\abs}[1]{\left| #1 \right|}

\def\Zq{\mathbb{Z}_q}
\def\Zqx{\mathbb{Z}_q^*}
\def\Zd{\mathbb{Z}_d}
\def\Zdx{\mathbb{Z}_d^*}
\def\Zf{\mathbb{Z}_f}
\def\Zfx{\mathbb{Z}_f^*}
\def\Zp{\mathbb{Z}_p}
\def\Zpx{\mathbb{Z}_p^*}
\def\cM{\mathcal M}
\def\cE{\mathcal E}
\def\cH{\mathcal H}

\def\le{\leqslant}

\def\ge{\geqslant}

\def\sfB{\mathsf {B}}
\def\sfC{\mathsf {C}}
\def\L{\mathsf {L}}
\def\FF{\mathsf {F}}

\def\sE {\mathscr{E}}
\def\sS {\mathscr{S}}

\def\cA{{\mathcal A}}
\def\cB{{\mathcal B}}
\def\cC{{\mathcal C}}
\def\cD{{\mathcal D}}
\def\cE{{\mathcal E}}
\def\cF{{\mathcal F}}
\def\cG{{\mathcal G}}
\def\cH{{\mathcal H}}
\def\cI{{\mathcal I}}
\def\cJ{{\mathcal J}}
\def\cK{{\mathcal K}}
\def\cL{{\mathcal L}}
\def\cM{{\mathcal M}}
\def\cN{{\mathcal N}}
\def\cO{{\mathcal O}}
\def\cP{{\mathcal P}}
\def\cQ{{\mathcal Q}}
\def\cR{{\mathcal R}}
\def\cS{{\mathcal S}}
\def\cT{{\mathcal T}}
\def\cU{{\mathcal U}}
\def\cV{{\mathcal V}}
\def\cW{{\mathcal W}}
\def\cX{{\mathcal X}}
\def\cY{{\mathcal Y}}
\def\cZ{{\mathcal Z}}
\newcommand{\rmod}[1]{\: \mbox{mod} \: #1}

\def\cg{{\mathcal g}}

\def\vy{\mathbf y}
\def\vr{\mathbf r}
\def\vx{\mathbf x}
\def\va{\mathbf a}
\def\vb{\mathbf b}
\def\vc{\mathbf c}
\def\ve{\mathbf e}
\def\vh{\mathbf h}
\def\vk{\mathbf k}
\def\vm{\mathbf m}
\def\vz{\mathbf z}
\def\vu{\mathbf u}
\def\vv{\mathbf v}

\def\e{{\mathbf{\,e}}}
\def\ep{{\mathbf{\,e}}_p}
\def\eq{{\mathbf{\,e}}_q}

\def\Tr{{\mathrm{Tr}}}
\def\Nm{{\mathrm{Nm}}}

 \def\SS{{\mathbf{S}}}

\def\lcm{{\mathrm{lcm}}}

 \def\0{{\mathbf{0}}}

\def\({\left(}
\def\){\right)}
\def\l|{\left|}
\def\r|{\right|}
\def\fl#1{\left\lfloor#1\right\rfloor}
\def\rf#1{\left\lceil#1\right\rceil}
\def\fl#1{\left\lfloor#1\right\rfloor}
\def\ni#1{\left\lfloor#1\right\rceil}
\def\sumstar#1{\mathop{\sum\vphantom|^{\!\!*}\,}_{#1}}

\def\mand{\qquad \mbox{and} \qquad}

\def\tblue#1{\begin{color}{blue}{{#1}}\end{color}}




\hyphenation{re-pub-lished}

\mathsurround=1pt

\def\bfdefault{b}

\def \F{{\mathbb F}}
\def \K{{\mathbb K}}
\def \N{{\mathbb N}}
\def \Z{{\mathbb Z}}
\def \P{{\mathbb P}}
\def \Q{{\mathbb Q}}
\def \R{{\mathbb R}}
\def \C{{\mathbb C}}
\def\Fp{\F_p}
\def \fp{\Fp^*}

 \def \xbar{\overline x}

\title[Sums of Kloosterman sums]
{Sums of Kloosterman sums over square-free and smooth 
integers} 

\author [X. Shao]{Xuancheng Shao}\thanks{XS was supported by NSF Grant DMS-2200565.}
\address{Department of Mathematics, University of Kentucky, 715 Patterson Office Tower, Lexington, KY 40506, USA}
\email{xuancheng.shao@uky.edu}

\author[I. E. Shparlinski] {Igor E. Shparlinski}\thanks{I.S. was supported in part by ARC Grant~DP230100534}
\address{School of Mathematics and Statistics, University of New South Wales, Sydney NSW 2052, Australia}
\email{igor.shparlinski@unsw.edu.au}

\author [L. P. Wijaya]{Laurence P. Wijaya}
\address{Department of Mathematics, University of Kentucky, 715 Patterson Office Tower, Lexington, KY 40506, USA}
\email{laurence.wijaya@uky.edu}

\begin{abstract}  Recently there has been a large number of works on bilinear sums with Kloosterman sums
and on sums of Kloosterman sums twisted by arithmetic functions. Motivated by these, we consider 
several related new questions about sums of Kloosterman sums parametrised by square-free and smooth
integers. 
 \end{abstract}

\keywords{Sum of Kloosterman sums, square-free 
integers, smooth  integers}
\subjclass{11L05, 11L07, 11N25}

\maketitle

\tableofcontents

\section{Introduction}
\subsection{Motivation}
For a prime $p$ and an integer $n$, we define the $s$-dimensional Kloosterman sum
$$ \cK_{s, p}(n) = p^{-(s-1)/2} \sum_{\substack{x_1, \ldots, x_s=1\\
x_1\cdots x_s \equiv n \pmod p}}^{p-1} \ep\(x_1 +\cdots + x_s\)
$$
where $\ep(x) = \exp(2\pi ix/p)$. The celebrated result of Deligne~\cite{Del}
gives the following bound:
 \begin{equation}
\label{eq:Deligne}
 |\cK_{s, p}(n)| \le s.
\end{equation}
See also~\cite[Section~11.11]{IwKow}. In the classical case of $s=2$ we denote 
$$
 \cK_{p}(n) =  \cK_{2, p}(n).
 $$

Recently,  there  has been active interest in estimating sums of Kloosterman sums either over sequences of parameters $n$ of arithmetic interest, or twisted 
by arithmetic functions, such as, 
\begin{equation}
        \label{eq:Sums}
P_{s, p}(L) = \sum_{\substack{\ell \le L\\ \ell~\text{prime}}}
\cK_{s, p}(\ell) \quad \text{and} \quad M_{s, p}(f;N) =  \sum_{n \le N} f(n)\cK_{s, p}(n)
\end{equation}
with some multiplicative function $f(n)$ such as the M\"obius function
$f(n) = \mu(n)$ or the divisor function $f(n) = \tau(n)$.
See, for example,~\cite{BFKMM2, FKM1, KoSh, LSZ}.

These results largely rely on recent progress on bounds of bilinear Type~I and Type~II sums with Kloosterman sums; 
see~\cite{BagShp, BFKMM1, BFKMM2, FKM1, KSWX,KMS1,KMS2, Shp, ShpZha}.
In particular,~\cite[Theorem~1.5]{FKM1} implies power-saving bounds on the 
sums $P_{s, p}(L)$ given by~\eqref{eq:Sums} with $L \ge p^{3/4+\varepsilon}$ for an arbitrary fixed $\varepsilon > 0$. These bounds were subsequently improved in~\cite[Theorem~1.8]{BFKMM2} in the case of $s=2$ and for the same range of $L$.

For the sums $M_{s, p}(f; N)$ given by~\eqref{eq:Sums},~\cite[Theorem~1.7]{FKM1} gives power-saving bounds on $M_{s, p}(\mu; N)$ when $f(n) = \mu(n)$, provided that 
$N\ge p^{3/4+\varepsilon}$ for an arbitrary fixed $\varepsilon > 0$. Similar bounds are given by~\cite[Corollary~1.4]{KMS2} on $M_{s, p}(\tau; N)$ when $f(n) = \tau(n)$, provided that $N\ge p^{2/3+\varepsilon}$.
These  thresholds have both been reduced to $N\ge p^{1/2+\varepsilon}$
in~\cite{KoSh}, however with a logarithmic saving instead of a power saving in the bound.

\subsection{Sums of Kloosterman sums over square-free numbers}

In this paper we first consider sums of Kloosterman sums over square-free numbers (that is, over numbers which are not divisible by the square of a prime):
$$
Q_{s, p}(N)  =  \sum_{n \le N} |\mu(n)| \cK_{s, p}(n) 
= \sum_{\substack{n \le N\\ n~\text{square-free}}}  \cK_{s, p}(n).
$$

Note that we have the trivial bound 
 \begin{equation}
\label{eq:Triv 1}
Q_{s, p}(N)  \ll  N
\end{equation}
implied  by~\eqref{eq:Deligne}. Our goal is twofold: to obtain a nontrivial bound (possibly with a power saving) for $N$ as small as possible and to have a bound as good as possible for any given $N$.

Our main result in this direction is the following bound.

\begin{theorem}
\label{thm:SquareFree any s}
    For any positive integer $s \ge 2$ and even positive integer $\ell$,  if  $p^{1/2+2/\ell} \le N \le p$ then 
$$
|Q_{s, p}(N)| \le  N^{1/2} p^{1/4} \(\frac{p^{1/2+2/\ell}}{N}\)^{\tfrac{1}{2(4\ell-3)}} p^{o(1)}.
$$   
\end{theorem}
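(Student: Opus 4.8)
The plan is to detect square-freeness through the identity $|\mu(n)| = \sum_{d^2 \mid n}\mu(d)$, which gives
$$
Q_{s,p}(N) = \sum_{d \le N^{1/2}}\mu(d)\sum_{m \le N/d^2}\cK_{s,p}(d^2 m).
$$
I would then fix a threshold $D$, to be optimised at the end, and treat $d \le D$ and $d > D$ separately. For the tail $d > D$ the inner sums are estimated trivially from Deligne's bound $|\cK_{s,p}(n)| \le s$ in \eqref{eq:Deligne}, contributing $\sum_{d > D} N/d^2 \ll N/D$. Everything then rests on extracting cancellation from the head
$$
S = \sum_{d \le D}\mu(d)\sum_{m \le N/d^2}\cK_{s,p}(d^2 m),
$$
a Type~I bilinear sum in $d$ and $m$ carrying the (essentially harmless) weight $\mu(d)$.

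To handle $S$ I would decompose $d$ into dyadic blocks $d \sim D'$, so that the length $M \approx N/D'^2$ of the inner sum is essentially constant on a block, and complete the sum over $m$ by additive characters modulo $p$. The key structural input is that the additive Fourier transform of $n \mapsto \cK_{s,p}(an)$ is itself an explicit object: for $s=2$ it is the pure phase $p^{1/2}\ep(a\bar h)$, and for general $s$ it is a trace function of bounded conductor, hence $\ll 1$ by Deligne. After completion the inner structure collapses and, up to the completion weights $c_h = \sum_{m\le M}\ep(hm)$, a block becomes
$$
S_{D'} \approx p^{-1/2}\sum_{h \ne 0} c_h \sum_{d \sim D'}\mu(d)\,\Phi(d^2\bar h),
$$
where $\Phi$ is the relevant phase or trace function. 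This isolates the essential arithmetic object, a Weyl-type sum $W_c = \sum_{d\sim D'}\mu(d)\,\Phi(c\,d^2)$ with a \emph{quadratic} argument in $d$.

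The heart of the argument is an $\ell$-th moment estimate for $W_c$, which is where the even integer $\ell$ enters. Applying Hölder's inequality over $h$ with exponents $\ell$ and $\ell/(\ell-1)$ separates the elementary completion weights, for which $\sum_h |c_h|^{\ell/(\ell-1)} \ll p\,M^{1/(\ell-1)}p^{o(1)}$, from the moment $\sum_c |W_c|^\ell$. Expanding the latter with $\ell$ even gives $p$ times the number of tuples $d_1,\dots,d_\ell \sim D'$ whose squares match, $d_1^2,\dots,d_{\ell/2}^2$ against $d_{\ell/2+1}^2,\dots,d_{\ell}^2$ (the diagonal, of size $\ll D'^{\ell/2}p^{o(1)}$ since $d<p^{1/2}$ forces $d_i^2$ to determine $d_i$), plus an off-diagonal term in which the $c$-sum of a product of $\ell$ trace functions enjoys square-root cancellation by Deligne--Katz and the cited Type~I/II estimates. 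Inserting this, undoing the dyadic decomposition (the geometric series in $D'$ being dominated by $D'\approx D$), and optimising $D$ against the tail $N/D$ is arranged to yield a bound of the shape $N^{1/2}p^{1/4}(p^{1/2+2/\ell}/N)^{\theta}$ with the stated exponent $\theta=\tfrac{1}{2(4\ell-3)}$, the hypothesis $N\ge p^{1/2+2/\ell}$ ensuring the saving is nontrivial.

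The main obstacle I anticipate is the moment estimate and its interface with completion: one must show that the off-diagonal part of $\sum_c |W_c|^\ell$ genuinely saves a full power of $p$ over the trivial $D'^{\ell}$, which for $s\ge 3$ requires the full force of the equidistribution of the hyper-Kloosterman trace function (through the cited bilinear bounds) rather than the elementary Gauss-sum evaluation available when $s=2$. Keeping careful track of the coupled ranges $M\approx N/D'^2$ and of the diagonal-versus-off-diagonal balance is precisely what fixes the exponent $4\ell-3$ and the admissible range of $N$.
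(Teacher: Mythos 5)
Your opening coincides with the paper's: the identity $|\mu(n)|=\sum_{d^2\mid n}\mu(d)$, the resulting Type~I sum $\sum_{d}\mu(d)\sum_{m\le N/d^2}\cK_{s,p}(d^2m)$, dyadic blocks $d\sim D'$, and the Deligne bound $N/D$ for the tail. The gap is in the core mechanism. Your plan is to complete the inner sum over $m$ modulo $p$, pass to the dual sums $W_h=\sum_{d\sim D'}\mu(d)\Phi(h\overline{d^2})$, and bound their $\ell$-th moment by a diagonal count plus square-root cancellation in the off-diagonal correlations. If you actually run the numbers, this does not produce the claimed saving. The off-diagonal part of the moment is genuinely of size about $p^{1/2}D'^{\ell}$ (a typical non-diagonal correlation sum over $h$ really is of order $p^{1/2}$, not smaller), and feeding this back through your H\"older step, with $\sum_h|c_h|^{\ell/(\ell-1)}\ll M^{1/(\ell-1)}p^{1+o(1)}$ and $M=N/D'^2$, gives for the block the bound
$$
S_{D'}\ \ll\ D'p^{1/2}\(\frac{M}{p^{1/2}}\)^{1/\ell}p^{o(1)} .
$$
This improves on the elementary completion bound $S_{D'}\ll D'p^{1/2}\log p$ of~\eqref{eq: T-PV} only when $M<p^{1/2}$, i.e.\ only when $D'>N^{1/2}p^{-1/4}$ --- exactly the regime in which the Deligne tail bound $N/D'$ already exceeds $N^{1/2}p^{1/4}$. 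Balancing your off-diagonal term against $N/D$ forces $D=N^{1/2}p^{-1/4}$ and the common value $N^{1/2}p^{1/4}$ for \emph{every} $\ell$ (the diagonal term is smaller under the hypothesis $N\ge p^{1/2+2/\ell}$, so it does not help). Thus your scheme, even with every analytic step justified, proves only $Q_{s,p}(N)\ll N^{1/2}p^{1/4+o(1)}$, which is the bound~\eqref{eq:Triv 2} that the paper notes already follows from the much simpler estimates~\eqref{eq: T-PV} and~\eqref{eq: T-Deligne}; the factor $(p^{1/2+2/\ell}/N)^{1/(2(4\ell-3))}$ is asserted in your write-up but not derived, and no choice of parameters in your scheme will produce it.

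The missing idea is the one encapsulated in Lemma~\ref{lem: Type I any s}. There, before any completion, one shifts $n\mapsto n+ab$ and averages over two auxiliary dyadic parameters $a\sim A$ and $b\sim B$ (the Kowalski--Michel--Sawin device), completes only in the short variable $b$, applies H\"older of order $\ell$ in the new variables $u\equiv d^ra$, $v\equiv\bar a n \pmod p$, and then counts solutions of the congruence $xk\equiv ym\pmod p$ via the Ayyad--Cochrane--Zheng bound rather than by a divisor argument. It is this extra bilinear structure in $(a,b)$ --- absent from your completion-in-$m$ approach --- that yields the block bound $f_2(D)=\frac{N}{D}\(D^3p^{1+1/\ell}/N^2\)^{1/(2\ell)}$, and the balance of $f_2$ against $f_1(D)=Dp^{1/2}$ is what generates the exponent $\tfrac{1}{2(4\ell-3)}$ and the threshold $N\ge p^{1/2+2/\ell}$. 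To repair your proof you would need to import this lemma (or an equivalent shifted Type~I estimate) in place of the moment argument.
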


\begin{remark} Our approach works for $N >  p$ 
as well. However, the optimisation 
of our argument becomes more cluttered in such generality. Since we are mostly interested in short sums and in 
order to exhibit  our ideas in tje simplest possible form, we focus on the case $N \le   p$. 
\end{remark}

Theorem~\ref{thm:SquareFree any s} is proved in Section~\ref{sec:SquareFree}. It in particular implies that if $p^{1/2+\varepsilon} \le N \le p$ for an arbitrary fixed $\varepsilon$, then 
 \begin{equation}
\label{eq:Triv 2}
Q_{s, p}(N)  \ll  N^{1/2} p^{1/4+o(1)}.
\end{equation}
In fact, the bound~\eqref{eq:Triv 2} can be quickly obtained by combining~\eqref{eq: T-PV} and~\eqref{eq: T-Deligne} in our proof of Theorem~\ref{thm:SquareFree any s}. Compared to~\eqref{eq:Triv 2}, we are able to obtain additional savings in Theorem~\ref{thm:SquareFree any s} by studying a certain type-I sum in Lemma~\ref{lem: Type I any s}. On the other hand, both the bound in Theorem~\ref{thm:SquareFree any s} and~\eqref{eq:Triv 2} beat the trivial bound~\eqref{eq:Triv 1} in the range $p^{1/2+\eps} \le N \le p$.

For a specific value of $N$, one can optimize the bound in Theorem~\ref{thm:SquareFree any s} by making the choice of $\ell$ that minimizes the term 
$$ \(\frac{p^{1/2+2/\ell}}{N}\)^{\tfrac{1}{2(4\ell-3)}}. $$
For example, if $N=p$ then by choosing $\ell = 8$ we get
$$ Q_{s,p}(p) \ll p^{3/4-1/232+o(1)}. $$

\subsection{Sums of Kloosterman sums over smooth numbers}

Next, we recall that an integer $n$ is called $y$-smooth if  $P(n) \le y$, 
where $P(n)$ denotes the largest prime divisor of an integer $n\ge 1$. See~\cite{Granv, HilTen} for backgrounds and classical estimates on smooth numbers.
For $N \ge y \ge 2$, we denote by $\cS(N,y)$ the set of $y$-smooth positive integers  $n \le N$
and, as usual, we denote $\Psi(N,y) = \# \cS(N,y)$.

We consider the sum
$$
R_{s.p}(N,y) = 
\sum_{n \in \cS(N,y)}  \cK_{s, p}(n) ,
$$
for which we have, in analogy to~\eqref{eq:Triv 1}, the trivial bound  
$$
R_{s, p}(N,y)  \ll \Psi(N,y) \le N.
$$

As in the case of squarefree numbers, our goal is to obtain nontrivial bounds for $N$ as small as possible and, at the same time, to have a bound as good as possible for any given $N$ and $y$. Additionally, we are also interested in obtaining nontrivial bounds for $y$ as small as possible.

Before formulating our main result in this direction,  we recall some well-known estimates  in the theory of smooth numbers. Let $\alpha(N,y)$ be the {\it saddle point\/} corresponding to the $y$-smooth numbers
up to $N$ as discussed in~\cite{Harp, HilTen}. 
In particular,  $\alpha(N,y)$ satisfies 
$$
\alpha(N,y) = (1+o(1))\frac{\log(1+y/\log N)}{\log y}
$$
provided that $y \leq N$ and $y \to \infty$;
see~\cite[Theorem~2]{HilTen}.
In particular, if $\log y/\log \log N \to \infty$ then $\alpha(N,y)  \to 1$, and if $y = (\log N)^K$ for some $K \geq 1$ then $\alpha(N,y) = 1-1/K + o(1)$. We have
\begin{equation}
\label{eq: Psi N-alpha}
 \Psi(N,y)  = N^{\alpha(N,y) + o(1)};
\end{equation}
see, for example,~\cite[Section~2]{Harp} or~\cite[Theorem~1]{HilTen}.

\begin{theorem}\label{thm:smooth-Kloosterman}
    Let $p$ be prime, $N \geq p^{1/2}$, and $y \geq \log N$. Then
$$ |R_{s,p}(N,y)| \le  \Psi(N,y) y^{1/2} p^{\beta} N^{-\gamma + o(1)} , 
$$
where 
$$
 \beta = \frac{1}{4\(1+\alpha(N,y)\)} \mand  \gamma= \frac{\alpha(N,y)^2}{2\(1+\alpha(N,y)\)}. 
$$
\end{theorem}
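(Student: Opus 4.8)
The strategy is to convert $R_{s,p}(N,y)$ into a symmetric bilinear (Type~II) sum in two smooth variables and then to optimise the splitting point using the saddle point $\alpha=\alpha(N,y)$. Concretely, after a combinatorial reduction I aim to bound sums of the shape $\sum_{m\sim M}\sum_{k\sim K}a_m b_k\,\cK_{s,p}(mk)$ with $MK\asymp N$ and $a,b$ the indicator sequences of $y$-smooth integers, for which $\|a\|_2^2\asymp\Psi(M,y)$ and $\|b\|_1=\|b\|_2^2\asymp\Psi(K,y)$. The key point, confirmed by the shape of $\beta$ and $\gamma$, is that both variables must carry the smooth counts: the factor $(1+\alpha)$ in the denominators arises as $1+\alpha$, the sum of the completion exponent $1$ and the smooth-count exponent $\alpha$.

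The first ingredient is a Type~II estimate: for sequences $a_m$ supported on $m\sim M$ and $b_k$ supported on $k\sim K$ with $MK\le N$,
$$\Big|\sum_m\sum_k a_m b_k\,\cK_{s,p}(mk)\Big|\ll\|a\|_2\big(M^{1/2}\|b\|_2+\|b\|_1\,p^{1/4}\big)p^{o(1)}.$$
I would prove this by Cauchy--Schwarz in the variable $m$ followed by completion. Expanding the square leaves a diagonal term $\ll M\|b\|_2^2$, where I use $|\cK_{s,p}|\ll 1$ from Deligne's bound~\eqref{eq:Deligne}, and an off-diagonal term controlled by $\sum_{k_1\ne k_2}|b_{k_1}b_{k_2}|\,\big|\sum_{m\sim M}\cK_{s,p}(mk_1)\overline{\cK_{s,p}(mk_2)}\big|$. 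Completing the inner correlation modulo $p$ and viewing $n\mapsto\cK_{s,p}(n)$ as the trace function of the Kloosterman sheaf, for $k_1\not\equiv k_2\pmod p$ the two relevant sheaves are geometrically non-isomorphic, so the complete correlation twisted by any additive character enjoys square-root cancellation; this gives $\ll p^{1/2+o(1)}$ for the inner sum and hence an off-diagonal contribution $\ll\|b\|_1^2\,p^{1/2+o(1)}$. In the optimal regime below the shorter variable satisfies $K<p$, so distinct $k_1,k_2$ remain distinct modulo $p$ and this step applies.

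To feed $R_{s,p}(N,y)$ into this estimate I would use the anatomy of smooth integers: every $y$-smooth $n$ has a divisor in any multiplicative window of length $y$, so it can be factored as $n=mk$ with $m$ near a prescribed scale $M$ and $k=n/m$, both $y$-smooth. Turning this selection of a divisor into honest product-type coefficients costs a Cauchy--Schwarz, and since a typical smooth number has only $O(1)$ divisors in a window of multiplicative length $y$, this costs a factor $y^{1/2+o(1)}$. After the reduction I apply the Type~II bound with $a,b$ the smooth indicators and insert the saddle-point estimate $\Psi(M,y)=M^{\alpha+o(1)}$, $\Psi(K,y)=K^{\alpha+o(1)}$ with the common exponent $\alpha=\alpha(N,y)$. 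Writing $MK=N$, the diagonal term becomes $M^{1/2}N^{\alpha/2}$ and the off-diagonal term $N^{\alpha}M^{-\alpha/2}p^{1/4}$; these balance at $M^{(1+\alpha)/2}=N^{\alpha/2}p^{1/4}$, i.e. $M=N^{\alpha/(1+\alpha)}p^{1/(2(1+\alpha))}$, where the common value equals $N^{\alpha}\,p^{1/(4(1+\alpha))}N^{-\alpha^2/(2(1+\alpha))}$. Multiplying by the reduction cost $y^{1/2}$ and using $N^{\alpha+o(1)}=\Psi(N,y)$ yields exactly $\Psi(N,y)\,y^{1/2}p^{\beta}N^{-\gamma+o(1)}$. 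The balancing forces $K=N/M=N^{1/(1+\alpha)}p^{-1/(2(1+\alpha))}$, which is $\ge1$ precisely when $N\ge p^{1/2}$; this is the source of the hypothesis.

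I expect the main obstacle to be the reduction step: making the passage to a clean symmetric bilinear form rigorous while verifying that the loss is only $y^{1/2}$ rather than something larger. This requires controlling the number of smooth divisors of $n$ in the relevant window (so that the Cauchy--Schwarz is efficient) and keeping the resulting coefficients as genuine smooth indicators with the sharp sizes $\|b\|_1=\|b\|_2^2\asymp\Psi(K,y)$, since the improvement of the exponent from $p^{1/4}$ to $p^{1/(4(1+\alpha))}$ depends on having the smooth count on \emph{both} variables. A secondary technical point is the saddle-point bookkeeping: one must check that replacing $\alpha(M,y)$ and $\alpha(K,y)$ by the single value $\alpha(N,y)$ only incurs an $N^{o(1)}$ loss throughout the admissible range $y\ge\log N$, which is what allows $\beta$ and $\gamma$ to be expressed solely in terms of $\alpha(N,y)$.
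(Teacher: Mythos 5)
Your proposal follows essentially the same route as the paper: factor each smooth $n$ as $n=\ell m$ with $\ell$ in a multiplicative window of length $y$ above a threshold $L_0$, apply Cauchy--Schwarz in the short smooth variable, bound the off-diagonal correlations by $p^{1/2+o(1)}$ via completion, and balance to arrive at $L_0=p^{1/(2(1+\alpha))}N^{\alpha/(1+\alpha)}$, which is exactly the paper's choice and yields the same exponents $\beta,\gamma$ and the same source of the hypothesis $N\ge p^{1/2}$. The only substantive difference is how the $y^{1/2}$ loss is booked: the paper sidesteps your divisor-multiplicity concern by making the factorization \emph{unique} (taking $\ell$ to be the product of the smallest prime factors of $n$ until it first exceeds $L_0$, with the conditions $\ell/P(\ell)\le L_0$ and $p(m)\ge P(\ell)$), the factor $y$ then entering harmlessly as a sum over the largest prime $q=P(\ell)\le y$ rather than through an estimate on the number of divisors in the window.
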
  

Some remarks on the bound are in order. If $y = N^{o(1)}$ then $y^{1/2}$ can be dropped from the bound and we have
$$R_{s,p}(N,y) \ll  \Psi(N,y) p^{\beta} N^{-\gamma+o(1)}, $$
which is non-trivial when $N \ge  p^{\beta/\gamma+ \eps} = p^{1/(2\alpha(N,y)^2) + \eps}$ for arbitrary fixed $\eps > 0$. 
In the special case $N=p$, we have a nontrivial bound for $R_{s,p}(p,y)$, provided that $\alpha(N,y) > 1/\sqrt{2}$, or $y \geq (\log N)^{2+\sqrt{2}+\eps}$.

On the other hand, if $\log y/\log\log N\rightarrow \infty$ then $\alpha(N,y) = 1 + o(1)$, and thus
Theorem~\ref{thm:smooth-Kloosterman}  yields
$$R_{s,p}(N,y) \ll   \Psi(N,y) y^{1/2}  p^{1/8} N^{-1/4+o(1)}, $$
which is non-trivial when   $N > y^2 p^{1/2+\eps}$.

The rest of the paper is organized as follows. In Section~\ref{sec:typeI} we state some preliminary lemmas including a type-I estimate which is curicial in our treatment of the square-free numbers. Then in Sections~\ref{sec:SquareFree} and~\ref{sec:smooth}, we prove Theorems~\ref{thm:SquareFree any s} and~\ref{thm:smooth-Kloosterman}, respectively.

\section{Preparations} 
\subsection{Notation}

We use the standard notations $U\ll V$ and $V \gg U$  as  equivalent to the statement $|U|\leq c V$, for some  constant $c> 0$, which throughout
this paper may depend only on the integer parameters $\ell$ and $s$.

For a finite set $\cS$ we use $\# \cS$ to denote its cardinality.

The variables of summation $d$, $k$, $m$ and $n$ are always positive integers.

We also follow the convention that fractions of the shape $1/ab$ mean $1/(ab)$ (that than 
$b/a$ as their formal interpretation requires). 

The letter $p$ always denotes a prime number, and we use $\F_p$ 
to denote the  finite field of $p$ elements. 

We denote by $p(\ell)$ and $P(\ell)$, the smallest and the largest prime factors of an integer $\ell\ne 0$, respectively.
We adopt the convention that $p(1) = P(1) = +\infty$.

Finally, we write $\sum_{k\le K}$ to denote the summation over positive integer $k \le K$.

\subsection{Preliminary bounds on sums of Kloosterman sums}\label{sec:typeI}

The  following result is a special case of~\cite[Corollary~1.6]{FKM2}.

\begin{lemma}
\label{lem:FKM}
Uniformly over   $h \in \Z$,  and 
$d \in \F_p\setminus\{0,1\}$,  we have 
\begin{align*}
& \sum_{x \in\F_p}  \cK_{s, p}\(x\) \ep(hx)\ll p^{1/2},  \\
& \sum_{x \in\F_p} \cK_{s, p}\(x\)  \cK_{s, p}\(d x\) \ep(hx) \ll p^{1/2} .
\end{align*}
\end{lemma}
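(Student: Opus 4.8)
The plan is to view $\cK_{s,p}$ as a trace function and to separate the two sums: the first is elementary, while the second is where the algebraic input is really needed. Throughout I use that for $s\ge 2$ and $x\in\F_p^*$ the value $\cK_{s,p}(x)$ is the trace function of the hyper-Kloosterman sheaf $\mathrm{Kl}_s$ on $\mathbb{G}_m/\F_p$, a geometrically irreducible middle-extension $\ell$-adic sheaf of rank $s$, pure of weight $0$, whose conductor is bounded solely in terms of $s$; note also that $\cK_{s,p}(0)=0$, so the point $x=0$ never contributes.

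For the first sum I would argue by hand. Writing $\mathrm{Kl}_s(x)=p^{(s-1)/2}\cK_{s,p}(x)=\sum_{x_1\cdots x_s=x}\ep(x_1+\dots+x_s)$ for the unnormalised sum, we have
\[
\sum_{x\in\F_p}\cK_{s,p}(x)\ep(hx)=p^{-(s-1)/2}\sum_{x_1,\dots,x_s\in\F_p^*}\ep\big(x_1+\dots+x_s+h\,x_1\cdots x_s\big).
\]
Carrying out the sum over $x_s$ first and using orthogonality of additive characters collapses the multiple sum, for $h\neq 0$, to $p\,\mathrm{Kl}_{s-1}(-1/h)$ up to a bounded error (and for $h=0$ it is $O(1)$). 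Deligne's bound~\eqref{eq:Deligne}, applied to the $(s-1)$-dimensional Kloosterman sum, gives $|\mathrm{Kl}_{s-1}(-1/h)|\le(s-1)p^{(s-2)/2}$, so the whole expression is $\ll p^{s/2}$; after the normalising factor $p^{-(s-1)/2}$ this is $\ll p^{1/2}$, as claimed. (It is exactly here that $s\ge 2$ is used, so that the surviving sum is a genuine lower-dimensional Kloosterman sum of the right size.)

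For the second sum I would pass to the trace-function formalism: $\cK_{s,p}(x)\cK_{s,p}(dx)$ is the trace function of $\mathrm{Kl}_s\otimes[\times d]^*\mathrm{Kl}_s$, where $[\times d]$ is multiplication by $d$, and the displayed sum is the complete sum of this trace function twisted by the Artin--Schreier sheaf $\mathcal{L}_{\ep(hx)}$. The general correlation estimate of~\cite[Corollary~1.6]{FKM2} then gives a bound $\ll p^{1/2}$, with an implied constant depending only on the (bounded) conductor, \emph{provided} the sheaf $\mathrm{Kl}_s\otimes[\times d]^*\mathrm{Kl}_s\otimes\mathcal{L}_{\ep(hx)}$ has no geometrically trivial subquotient; equivalently, provided $[\times d]^*\mathrm{Kl}_s$ is not geometrically isomorphic to the dual of $\mathrm{Kl}_s$ twisted by $\mathcal{L}_{\ep(-hx)}$.

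I expect this non-degeneracy condition to be the main obstacle and the genuine content of the lemma, and it is exactly where the hypothesis $d\in\F_p\setminus\{0,1\}$ enters. The excluded value $d=1$ is the diagonal case, in which for $h=0$ the sum degenerates to $\sum_x|\cK_{s,p}(x)|^2\sim p$; ruling out a spurious isomorphism for $d\neq 0,1$ rests on Katz's description of the local monodromy of $\mathrm{Kl}_s$ (tame unipotent at $0$, totally wild with Swan conductor $1$ at $\infty$), which forbids matching a nontrivial multiplicative translate of $\mathrm{Kl}_s$ to its dual up to an additive-character twist. Since this structural fact is precisely what is packaged into~\cite[Corollary~1.6]{FKM2}, the lemma follows as the stated special case.
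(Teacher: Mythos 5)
Your proposal is correct, and it is in fact more detailed than the paper, whose entire proof of this lemma is the single remark that both bounds are special cases of~\cite[Corollary~1.6]{FKM2}. For the second sum you follow the same route, invoking that corollary, but you make explicit the non-degeneracy hypothesis it silently packages: that $\mathrm{Kl}_s\otimes[\times d]^*\mathrm{Kl}_s\otimes\cL_{\ep(hx)}$ has no trivial geometric subquotient, which is ruled out via Katz's local monodromy description (wildness at $\infty$ kills the case $h\ne 0$, and non-isomorphism of nontrivial multiplicative translates handles $h=0$). For the first sum you genuinely depart from the paper: instead of citing~\cite{FKM2} you give the classical elementary reduction, summing over $x_s$ and using orthogonality to collapse the complete sum to $p\,\mathrm{Kl}_{s-1}(-1/h)$ plus a bounded term, then applying Deligne's bound~\eqref{eq:Deligne} in dimension $s-1$; this is correct and buys a self-contained argument with an explicit constant, while the paper's citation buys only brevity. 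One small caveat on your closing discussion: the degenerate correlation occurs when $[\times d]^*\mathrm{Kl}_s$ is geometrically isomorphic to the dual of $\mathrm{Kl}_s$, which is $[\times(-1)^s]^*\mathrm{Kl}_s$, so the genuinely bad point is $d=(-1)^s$ with $h=0$. For $s$ even this is exactly the excluded value $d=1$ and your description of the diagonal $\sum_{x}|\cK_{s,p}(x)|^2\sim p$ is accurate; for $s$ odd, however, $\cK_{s,p}$ is not real-valued, the sum at $d=1$, $h=0$ is $\sum_x\cK_{s,p}(x)^2$ rather than $\sum_x|\cK_{s,p}(x)|^2$, and the degeneracy sits at $d=-1$ instead. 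This parity subtlety is present in the paper's statement as well (its bare citation does not address it), so it is not a defect of your argument, but it is worth flagging if one wants the lemma uniformly for all $s\ge 2$.
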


The standard completion technique (see~\cite[Section~12.2]{IwKow})
shows that  Lemma~\ref{lem:FKM} implies the following bound 
on incomplete sums. 

\begin{lemma}
\label{lem:FKM Incompl}
For any $K \le p$, uniformly over   $d, e \in \F_p$ satisfying $d \ne 0$ and $e/d \ne 0, 1$, we have 
\begin{align*}
& \sum_{n \le K}  \cK_{s, p}(dn) \ll  p^{1/2} \log p,\\
& \sum_{n \le K}  \cK_{s, p}(dn)  \cK_{s, p}(en) \ll  p^{1/2} \log p.
\end{align*}
\end{lemma}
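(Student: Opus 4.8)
The plan is to derive both estimates from Lemma~\ref{lem:FKM} by the classical completion method of~\cite[Section~12.2]{IwKow}. The starting point is the orthogonality identity which, for any function $F\colon \F_p\to\C$ and any $1\le K\le p$, expresses an incomplete sum through complete exponential sums:
$$
\sum_{n\le K}F(n)=\frac1p\sum_{h\bmod p}\widehat K(h)\sum_{x\in\F_p}F(x)\ep(hx),
\qquad \widehat K(h):=\sum_{m\le K}\ep(-hm),
$$
where we regard $F$ as a function of the residue $n\bmod p$. Taking absolute values, this yields
$$
\left|\sum_{n\le K}F(n)\right|
\le\left(\max_{h\bmod p}\left|\sum_{x\in\F_p}F(x)\ep(hx)\right|\right)\cdot
\frac1p\sum_{h\bmod p}\bigl|\widehat K(h)\bigr|.
$$
So the task reduces to two independent ingredients: a uniform-in-$h$ bound on the complete Fourier coefficient of $F$, and the $L^1$-bound $\sum_{h\bmod p}|\widehat K(h)|\ll p\log p$.

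For the first bound I would apply this with $F(x)=\cK_{s,p}(dx)$. Since $d\ne0$, the substitution $x\mapsto d^{-1}x$ gives
$$
\sum_{x\in\F_p}\cK_{s,p}(dx)\ep(hx)=\sum_{y\in\F_p}\cK_{s,p}(y)\,\ep\!\left(hd^{-1}y\right)\ll p^{1/2},
$$
uniformly in $h$ by the first estimate of Lemma~\ref{lem:FKM} (which is stated uniformly over $h\in\Z$, including $h=0$). For the second bound I would take $F(x)=\cK_{s,p}(dx)\cK_{s,p}(ex)$; the same substitution turns the complete sum into
$$
\sum_{y\in\F_p}\cK_{s,p}(y)\,\cK_{s,p}\!\left((e/d)y\right)\ep\!\left(hd^{-1}y\right)\ll p^{1/2},
$$
where now the second estimate of Lemma~\ref{lem:FKM} applies with $d$ there replaced by $e/d$. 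This is exactly where the hypothesis $e/d\ne0,1$ is used: it guarantees $e/d\in\F_p\setminus\{0,1\}$, the precise range in which Lemma~\ref{lem:FKM} is valid. In both cases the maximal Fourier coefficient is $\ll p^{1/2}$.

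Finally I would invoke the standard geometric-sum estimate $|\widehat K(h)|\ll\min\bigl(K,\|h/p\|^{-1}\bigr)$, where $\|\cdot\|$ is the distance to the nearest integer, and sum over $h\bmod p$ to obtain $\sum_{h\bmod p}|\widehat K(h)|\ll p\log p$. Combining this with the Fourier bound $\ll p^{1/2}$ and dividing by $p$ produces
$$
\left|\sum_{n\le K}F(n)\right|\ll\frac1p\cdot p^{1/2}\cdot p\log p=p^{1/2}\log p
$$
in both cases, as claimed. There is no serious obstacle here; the only points demanding care are verifying that the change of variables preserves the hypotheses of Lemma~\ref{lem:FKM}—in particular that $e/d\notin\{0,1\}$ is the condition needed for the bilinear estimate—and confirming that the uniformity in $h$ (including the term $h=0$) is already built into Lemma~\ref{lem:FKM}, so that taking the maximum over all $h$ costs nothing.
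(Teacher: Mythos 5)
Your proof is correct and is precisely the argument the paper intends: the paper proves this lemma simply by invoking the standard completion technique of~\cite[Section~12.2]{IwKow} applied to Lemma~\ref{lem:FKM}, which is exactly what you carry out, including the change of variables $x\mapsto d^{-1}x$ that reduces the complete sums to the form covered by Lemma~\ref{lem:FKM} and identifies $e/d\ne 0,1$ as the needed hypothesis. No discrepancies.
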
 

We now record a bound  on a variant of Type-I sums of Kloosterman sums.
Namely, instead of Type-I sums with  $\cK_{s,p}(mn)$
we consider sums with $\cK_{s,p}(m^{r} n)$ where $r$ is an 
arbitrary integer (if $r$ is negative we consider the argument 
of the corresponding Kloosterman sum modulo $p$). This is obtained by a slight extension of the argument of~\cite{KMS2}. 
See also~\cite{BaSh,BFKMM1,FKM1,KSWX,KMS1,KMS2}  for  several 
other bounds on related sums.

\begin{lemma}
\label{lem: Type I any s}
Fix an integer $r \neq 0$ and an even integer $\ell \ge 2$.
Let $D, N \le p$ be positive integers with $N > 2p^{1/\ell}$. For each $d \le D$ let $\cN_d \subset [1,N]$ be an interval. Then for any complex weights
$$
        \bm{\alpha}=\{\alpha_d\}_{d \le D}
$$
with $\alpha_d \ll 1$,  we have
$$
\sum_{d\le D} \sum_{n\in \cN_d} \alpha_d \cK_{s,p}(d^r n) 
\ll DN\(N^{-1}  +  \frac{p^{1+1/\ell}}{DN^2}\)^{1/(2\ell)}  p^{o(1)}.
$$
\end{lemma}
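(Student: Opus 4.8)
The plan is to reduce the weighted sum to a single even moment in the variable $d$ and then to estimate that moment by completion together with the Kloosterman correlation bound of Lemma~\ref{lem:FKM}. Write $T_d=\sum_{n\in\cN_d}\cK_{s,p}(d^rn)$, so that the quantity to be bounded is $\sum_{d\le D}\alpha_dT_d$. Since $\alpha_d\ll1$, Hölder's inequality with exponent $2\ell$ gives
$$
\Bigl|\sum_{d\le D}\alpha_dT_d\Bigr|\le D^{1-1/(2\ell)}\Bigl(\sum_{d\le D}|T_d|^{2\ell}\Bigr)^{1/(2\ell)}.
$$
Hence it suffices to establish the moment bound
$$
M:=\sum_{d\le D}|T_d|^{2\ell}\ll\bigl(DN^{2\ell-1}+N^{2\ell-2}p^{1+1/\ell}\bigr)p^{o(1)}.
$$
Indeed, taking the $2\ell$-th root of each term and multiplying by $D^{1-1/(2\ell)}$ turns $DN^{2\ell-1}$ into $DN(N^{-1})^{1/(2\ell)}$ and $N^{2\ell-2}p^{1+1/\ell}$ into $DN\bigl(p^{1+1/\ell}/(DN^2)\bigr)^{1/(2\ell)}$, which is precisely the claimed bound. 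So the whole argument is concentrated in the moment estimate for $M$.

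To bound $M$ I would first remove the $d$-dependence of the range $\cN_d$ by completing each inner sum over the interval $\cN_d\subseteq[1,N]$, exactly as in the proof of Lemma~\ref{lem:FKM Incompl}. This expresses $T_d$ as a short combination, weighted by the Fourier coefficients of the interval, of the completed values $\sum_{x\in\F_p}\cK_{s,p}(d^rx)\ep(hx)=\widehat{\cK}(hd^{-r})$, each of which is $O(p^{1/2})$ by the first bound of Lemma~\ref{lem:FKM}. The hypothesis $N>2p^{1/\ell}$ is what keeps the effective range of the frequencies $h$ down to size $p^{1-1/\ell+o(1)}$; this is the point at which $p^{1/\ell}$ (and hence the exponent $1+1/\ell$) enters. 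Opening the $2\ell$-th power of $T_d$ and summing in $d$ then produces an average over $d$ of products of $2\ell$ completed Kloosterman values along the multiplicative progression $hd^{-r}$.

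The key step is to split this average into a diagonal main term and an oscillatory off-diagonal term. On the diagonal, where the frequencies match so that the $d$-dependence collapses, the trivial bound $|\cK_{s,p}|\le s$ from \eqref{eq:Deligne} already suffices and yields the main contribution of size $DN^{2\ell-1}p^{o(1)}$. For the remaining configurations I would peel off the matched frequency pairs until a single genuine two-fold correlation of Kloosterman sums survives; by the second bound of Lemma~\ref{lem:FKM}, equivalently its incomplete form in Lemma~\ref{lem:FKM Incompl} applied with the fixed nonzero dilations $d_1^r$ and $\pm d_2^r$, this correlation exhibits square-root cancellation. The exceptional pairs with $d_1^r\equiv\pm d_2^r\pmod p$ number only $O(D)$ (each congruence has $O(1)$ solutions since $\gcd(r,p-1)=O(1)$) and are absorbed into the diagonal. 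Here the passage from the dilation $d$ to $d^r$ is genuinely harmless: in the surviving correlation the quantities $d_1^r,d_2^r$ appear only as constant multipliers of the common summation variable, so Lemma~\ref{lem:FKM Incompl} applies verbatim, the sign $\pm$ coming from the complex conjugation $\overline{\cK_{s,p}(x)}=\cK_{s,p}((-1)^sx)$.

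The main obstacle I expect is the combinatorial organisation of the $2\ell$-th moment: one must check that, away from the diagonal, every term reduces to a single two-fold correlation of the type controlled by Lemma~\ref{lem:FKM}, so that no higher-order correlations of Kloosterman sums are required, and then carry out the completion so that the off-diagonal is bounded by exactly $N^{2\ell-2}p^{1+1/\ell}$ in the regime $N>2p^{1/\ell}$. This balancing—separating the diagonal cleanly, tracking the number of surviving frequencies, and feeding the residual correlation into Lemma~\ref{lem:FKM}—is the delicate part and is precisely the place where the extension of the argument of~\cite{KMS2} is needed; the initial Hölder reduction and the $d^r$ bookkeeping are routine by comparison.
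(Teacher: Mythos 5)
There is a genuine gap at the heart of your argument: the claim that, after opening the $2\ell$-th power of $T_d=\sum_{n\in\cN_d}\cK_{s,p}(d^rn)$, every off-diagonal term ``reduces to a single two-fold correlation'' controlled by Lemma~\ref{lem:FKM} or Lemma~\ref{lem:FKM Incompl}. After completion in $n$, each factor becomes $\sum_{x\in\F_p}\cK_{s,p}(x)\ep(h_j\overline{d^r}x)$, which is itself a complete Kloosterman-type sum of size $p^{1/2}$, and the inner sum over $d\le D$ of the product of $2\ell$ such factors at distinct frequencies $h_1,\dots,h_{2\ell}$ is a \emph{$2\ell$-fold} correlation; you cannot peel off matched pairs and be left with only two genuinely correlated factors unless all but two frequencies already coincide, which is not the generic off-diagonal configuration. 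Bounding such terms requires the full multi-fold ``sums of products'' estimates underlying~\cite{KMS2}, not the two-fold case recorded in Lemma~\ref{lem:FKM}. Worse, even for a genuine two-fold correlation the surviving summation variable is $d\le D$ entering through $d^r$, so the relevant incomplete sum runs over the sparse polynomial set $\{d^r:\ d\le D\}$ rather than over an interval of consecutive integers; Lemma~\ref{lem:FKM Incompl} is stated for sums $\sum_{n\le K}\cK_{s,p}(dn)\cK_{s,p}(en)$ over an interval and does not apply ``verbatim'' here. Your diagonal accounting is also unsupported: the natural diagonal (frequencies pairing up) contributes about $DN^{\ell}$, not the $DN^{2\ell-1}$ you assert, so the target moment bound $M\ll(DN^{2\ell-1}+N^{2\ell-2}p^{1+1/\ell})p^{o(1)}$ is reverse-engineered from the answer rather than derived.

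The paper's proof avoids exactly these obstacles by a different route: it shifts $n\mapsto n+ab$ with averaging over dyadic ranges $a\sim A$, $b\sim B$ subject to $2AB\le N$, completes in $b$, changes variables $u=d^ra$, $v=\overline{a}n$, and applies H\"older to the counting function $\nu(u,v)$; the complete $2\ell$-fold correlation bounds (via~\cite{BaSh} and~\cite{KMS2}) then produce the factor $B^{\ell}p^{2}+B^{2\ell}p$, and the problematic multiplicative structure of $\{d^r\}$ is isolated into the congruence-counting quantity $J(2A,\cM)$, which is handled by the Ayyad--Cochrane--Zheng bound $J(2A,\cM)\ll A^2D^2/p+(AD)^{1+o(1)}$. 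The choices $B=\lfloor p^{1/\ell}\rfloor$ and $A=\lfloor N/(2B)\rfloor$ are where the hypothesis $N>2p^{1/\ell}$ and the exponent $1+1/\ell$ actually enter. None of these ingredients --- the $a,b$ smoothing, the change of variables, the multi-fold correlation input, or the congruence count --- appears in your sketch, and without them the off-diagonal estimate does not close.
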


\begin{proof} 
We follow the proof of~\cite[Theorem~4.3]{KMS2}, which is conveniently summarised in~\cite[Section~4.3]{BaSh} and also extended to
sums when the non-smooth variable runs through an arbitrary set.  Let
$$ S  =  \sum_{d\le D} \sum_{n\in \cN_d} \alpha_d \cK_{s,p}(d^r n).  $$
Let $A$ and $B$ be integer parameters for which 
\begin{equation}
\label{eq:AB N}
2AB\le N
\end{equation}
to be chosen later. By introducing averages over $a \sim A$ and $b \sim B$ (where $a \sim A$ denotes the dyadic range $A \leq a < 2A$ and similarly for $b \sim B$) and replacing $n$ by $n+ab$, we have
\begin{equation*}
\begin{split}
S &= \frac{1}{AB} \sum_{a \sim A} \sum_{b \sim B} \sum_{d \leq D} \alpha_d \sum_{\substack{n\in \Z\\ n+ab \in \cN_d}} \cK_{s,p}(d^r(n+ab))  \\
&= \frac{1}{AB} \sum_{a \sim A} \sum_{d \leq D}  \alpha_d \sum_{n\in \Z}  \sum_{\substack{b \sim B \\ n+ab \in \cN_d}} \cK_{s,p}(d^r(n+ab)). 
\end{split}
\end{equation*}
Since the range for the inner sum over $b$ is an interval, by the completing technique  (see~\cite[Section~12.2]{IwKow}), we have
$$ S \ll \frac{\log p}{AB} \sum_{a \sim A} \sum_{d \leq D} \sum_{n=-N}^N \left|\sum_{b \sim B}\cK_{s,p}(d^r(n+ab)) \e(bt) \right| $$
for some $t \in \R$,  where  $ \e(z) = \exp(2\pi i z)$.
 By making a change of variables $u = d^ra$ and $v = \overline{a}n$, we obtain
$$ S \ll \frac{\log p}{AB} \sum_{u, v \in \F_p} \nu(u,v) \left|\sum_{b \sim B} \cK_{s,p}(u(v+b)) \e(bt) \right|, $$
where $\nu(u,v)$ is the number of triples $(a,m,n)$ with $a \sim A$,  $m \in  \{d^r:~1\leq d \leq D\}$ and $n \in [-N, N]$ such that
$$ u \equiv \overline{a}n \pmod{p} \mand v \equiv ma\pmod{p}. $$

Following the steps in~\cite[Section~4.3]{BaSh} leading to~\cite[Equations~(4.9), (4.10), and~(4.11)]{BaSh}
and defining $\cM =   \{d^r:~1\leq d \leq D\}$,
we obtain
\begin{equation}
\label{eq:FKM-bound}
S^{2\ell}
\ll A^{-2}B^{-2\ell}  D^{2\ell-2} N^{2\ell-1}  \(B^\ell p^2 + B^{2\ell} p\) J(2A,\cM) p^{o(1)},  
\end{equation}
where  $J(H,\cM)$ denotes the number of solutions to the congruence
\begin{equation}
\label{eq:Congr}
x k\equiv ym\bmod p
\end{equation}
for which $x,y\in[1,H]$ and $k,m \in \cM$. 

Taking $B = \lfloor p^{1/\ell}\rfloor$, we see that~\eqref{eq:FKM-bound} simplifies as 
 \begin{equation}
\label{eq:FKM-bound-simpl}
S^{2\ell} \ll A^{-2} D^{2\ell-2} N^{2\ell-1} J(2A,\cM) p^{1+o(1)}. 
\end{equation}

In the setting of the proof of~\cite[Theorem~4.3]{KMS2}, 
the condition 
 \begin{equation}
\label{eq:FKM-cond}
2A\max_{m\in \cM} m < p
\end{equation}
was satisfied, which allows us to replace~\eqref{eq:Congr}
with an equation $x k = ym$ in integers. Then, using the classical 
bound on the divisor function (see~\cite[Equation~(1.81)]{IwKow}),
it is shown in~\cite{KMS2} that, under the condition~\eqref{eq:FKM-cond}
we have $J(2A,\cM) \le \(A\# \cM\)^{1+ o(1)}$. However,~\eqref{eq:FKM-cond} 
is too restrictive for our purpose, so we instead use a result of 
Ayyad, Cochrane and Zheng~\cite[Theorem~2]{ACZ} 
which,  similarly as in the proof of~\cite[Theorem~4.1]{CSZ},  leads to the bound 
\begin{equation}
\label{eq:J Bound}
J(2A,\cM)  \ll A^2D^2/p + (AD)^{1+ o(1)}. 
\end{equation}
Substituting~\eqref{eq:J Bound} in~\eqref{eq:FKM-bound-simpl}, we obtain 
$$
S^{2\ell} \ll  \(D^{2\ell} N^{2\ell-1}  + A^{-1} D^{2\ell-1} N^{2\ell-1}p\)  p^{o(1)}.
$$

Since $N > 2p^{1/\ell} \ge 2B$, we may choose
$$
A=\Big\lfloor \frac{N}{2B}\Big\rfloor \gg N p^{-1/\ell}
$$
which guarantees that the condition~\eqref{eq:AB N} is met. We obtain the stated bound
after simple calculations.
\end{proof} 

\section{Proofs of Main Results}

\subsection{Proof of Theorem~\ref{thm:SquareFree any s}}\label{sec:SquareFree}

Using inclusion-exclusion, we can write 
$$
Q_{s, p}(N) =
\sum_{d \le N^{1/2} }\mu(d)\sum_{n\le N/d^2} \cK_{s, p} (d^2n).
$$
Next, we  split the sum $Q_{s, p}(N)$
 into dyadic intervals  with respect to some parameter $D \ge 1$ to get $O(\log N)$ sums of the type
$$
 S(D,N)=\sum_{d\sim D}\mu(d)\sum_{n\le N/d^2} \cK_{s, p} (d^2n)
$$
for some $D \le N^{1/2}$. By Lemma~\ref{lem:FKM Incompl} we have 
\begin{equation} \label{eq: T-PV}
    S(D,N)\ll  Dp^{1/2}\log p.
\end{equation}
By the Deligne bound~\eqref{eq:Deligne} we have
\begin{equation} \label{eq: T-Deligne}
S(D,N) \ll DN/D^2 = N/D. 
\end{equation}

On the other hand, for any fixed  even  integer $\ell>0$, by Lemma~\ref{lem: Type I any s} we have
$$
S(D,N) \ll  DK\(K^{-1}  +  \frac{p^{1+1/\ell}}{DK^2}\)^{1/(2\ell)}  p^{o(1)},
$$
where $K=N/D^2$, provided that $K > 2p^{1/\ell}$. Note that
$$ DK =   N/D \le N \le p \le  p^{1+1/\ell}, $$
and thus
$$
K^{-1} \le \frac{p^{1+1/\ell}}{DK^2}. 
$$
It follows that if $K > 2p^{1/\ell}$ then
\begin{equation}    \label{eq:BilinBound s}
\begin{split}
S(D,N) &\ll DK\(\frac{p^{1+1/\ell}}{DK^2}\)^{1/(2\ell)}p^{o(1)} \\
&=  \frac{N}{D}\left( \frac{D^3p^{1+1/\ell}}{N^2} \right)^{1/(2\ell)}p^{o(1)}.
\end{split}
\end{equation}

Using the bounds~\eqref{eq: T-PV} or~\eqref{eq:BilinBound s} for $K > 2p^{1/\ell}$ and the bound~\eqref{eq: T-Deligne} for $K \leq 2p^{1/\ell}$, we arrive at
\begin{equation}\label{eq: T-overall}
S(D,N) \ll  \min\left\{f_1(D), f_2(D)\right\} p^{o(1)} + N^{1/2} p^{1/2\ell},
\end{equation}
where 
$$ 
f_1(D) = Dp^{1/2} \mand  f_2(D) = \frac{N}{D}\left( \frac{D^3p^{1+1/\ell}}{N^2} \right)^{1/(2\ell)}.
$$
Choose parameter 
$$ D_0 = \(\frac{N^{2\ell-2}}{p^{\ell-1-1/\ell}}\)^{\tfrac{1}{4\ell-3}} 
= N^{1/2}p^{-1/4} \(\frac{p^{1/2+2/\ell}}{N}\)^{\tfrac{1}{2(4\ell-3)}}  $$ 
such that $f_1(D_0) = f_2(D_0)$. 
Clearly $1 \le D_0 \le N^{1/2}$ since $N \geq p^{1/2 + 2/\ell}$ by assumption.
Hence
$$
\min\left\{f_1(D), f_2(D)\right\}   \le f_1(D_0)   \le N^{1/2}p^{1/4} \(\frac{p^{1/2+2/\ell}}{N}\)^{\tfrac{1}{2(4\ell-3)}}.
$$
It can be easily verified that
$$ p^{\frac{1}{2\ell}} \le p^{1/4} \(\frac{p^{1/2+2/\ell}}{p}\)^{\tfrac{1}{2(4\ell-3)}} \le p^{1/4} \(\frac{p^{1/2+2/\ell}}{N}\)^{\tfrac{1}{2(4\ell-3)}}. $$
Hence the second term on the right-hand side of~\eqref{eq: T-overall} is dominated by the first term and we have
$$
S(D,N) \ll  N^{1/2}p^{1/4} \(\frac{p^{1/2+2/\ell}}{N}\)^{\tfrac{1}{2(4\ell-3)}} p^{o(1)}.
$$
This concludes the proof.

\subsection{Proof of Theorem~\ref{thm:smooth-Kloosterman}}\label{sec:smooth}

Let $L_0 \in [1, N]$ be a parameter to be chosen later. Observe that any $y$-smooth integer in $(L_0, N]$ can be uniquely factored as $n = \ell m$ such that
$$ \ell \in (L_0, yL_0], \qquad \frac{\ell}{P(\ell)} \leq L_0,\qquad p(m) \geq P(\ell), $$
where $P(\ell)$ denotes the largest prime factor of $\ell$ and $p(m)$ denotes the smallest prime factor of $m$. Indeed, this factorization can be obtained by writing $n = p_1p_2\cdots p_k$ with primes $p_1 \le \cdots \le p_k$ and by setting $\ell = p_1p_2\cdots p_r$ where $r$ is the smallest positive integer such that $p_1p_2\cdots p_r > L_0$.

 Thus  we have
$$R_{s,p}(N,y) 
= \sum_{\substack{L_0 < \ell \leq P(\ell) L_0 \\ \ell \in S(y)}} \, \sum_{\substack{m \in S(N/\ell, y) \\ p(m) \geq P(\ell)}} \cK_{s,p}(\ell m) + O(L_0). $$
After dyadic partition of the range for $\ell$, we see that there is $L \in (L_0, yL_0]$ such that 
\begin{equation}
\label{eq:  R and U}
R_{s,p}(N,y)  \ll U \log N + L_0, 
\end{equation}
where 
$$ U= \sum_{\substack{L < \ell \leq \min\left\{P(\ell)L_0, 2L\right\} \\ \ell \in S(y)}} \,  \sum_{\substack{m \in S(N/\ell, y) \\ p(m) \geq P(\ell)}} \cK_{s,p}(\ell m). 
$$

We now employ  the completing technique as in~\cite[Section~12.2]{IwKow} again. 
That is, first  we write 
\begin{align*}
 U& =  \sum_{\substack{L < \ell \leq \min\left\{P(\ell)L_0, 2L\right\} \\ \ell \in S(y)}} \,  \sum_{\substack{m \in S(N/L, y) \\ p(m) \geq P(\ell)}} \cK_{s,p}(\ell m) \\
 & \qquad \qquad \qquad\qquad\qquad  \frac{1}{N} \sum_{ 1 \le k \le N/\ell} \sum_{a=1}^N   \e\(a (m-k)/N\).
\end{align*}
After changing the order of summation and using~\cite[Equation~(8.6)]{IwKow} (similarly to how we have 
done  in Section~\ref{sec:SquareFree}), we derive 
 $$ U \ll  \sum_{\substack{\ell \sim L \\ \ell \in S(y)}} \left| \sum_{\substack{m \in S(N/L, y) \\ p(m) \geq P(\ell)}} \cK_{s,p}(\ell m) \e(\eta m) \right| \log N
$$
for some real $\eta \in \R$,  where, as before,  $ \e(z) = \exp(2\pi i z)$.

By the Cauchy--Schwarz inequality,  we have
$$ U^2 \ll (\log N)^2 \Psi(L, y)  \sum_{\substack{\ell \sim L \\ \ell \in S(y)}} \left| \sum_{\substack{m \in S(N/L, y) \\ p(m) \geq P(\ell)}} \cK_{s,p}(\ell m) \e(\eta m) \right|^2. $$
Writing $q = P(\ell)$ and replacing $\ell$ by $q\ell$, we have
$$U^2 \ll (\log N)^2 \Psi(L, y)  \sum_{q \leq y} \sum_{\ell \sim L/q} \left| \sum_{\substack{m \in S(N/L, y) \\ p(m) \geq q}} \cK_{s,p}(q \ell m) \e(\eta m) \right|^2, $$
where we have dropped the primality condition on $q$. Expanding the square, we get
\begin{equation}\label{eq:smooth1} 
\begin{split}
U^2 \ll  & (\log N)^2 \Psi(L,y) \\ 
&\times \sum_{q \leq y} \sum_{m_1,m_2 \in S(N/L,y)} \left| \sum_{\ell \sim L/q} \cK_{s,p}(q\ell m_1) \overline{\cK_{s,p}(q\ell m_2)} \right|. 
\end{split} 
\end{equation}

The contribution $Y_1$ to the right-hand side of~\eqref{eq:smooth1}  from  the diagonal terms with $m_1=m_2$ is 
$$
Y_1 \ll (\log N)^2 \Psi(L,y) \sum_{q \leq y} \Psi(N/L, y) \frac{L}{q} \ll L N^{-\alpha} \Psi(N,y)^2 (\log N)^3, $$
where $\alpha= \alpha(N,y)$ and  we have also used the standard bounds (see~\cite[Section~2]{Harp})
$$ \Psi(L, y) \ll \left(\frac{L}{N}\right)^{\alpha}\Psi(N,y),\quad\Psi(N/L,y) \ll \frac{1}{L^{\alpha}} \Psi(N,y). $$

To estimate the contribution $Y_2$ to the right-hand side of~\eqref{eq:smooth1}  from  the non-diagonal terms 
with $m_1 \neq m_2$,  we observe that Lemma~\ref{lem:FKM Incompl} implies that
$$ \left| \sum_{\ell \sim L/q} K_p(q\ell m_1) \overline{K_p(q\ell m_2)} \right| \ll p^{1/2} \log p $$
when $m_1 \neq m_2$. Hence
\begin{align*}
Y_2 & \ll (\log N)^2 \Psi(L,y)\cdot  y \Psi(N/L, y)^2 p^{1/2}\log p \\
& \ll y p^{1/2} L^{-\alpha} \Psi(N,y)^2 N^{o(1)}. 
\end{align*}

Overall, with $\alpha = \alpha(N,y)$, we then have
\begin{align*}U^2& \ll Y_1 + Y_2   \le (LN^{-\alpha} + yp^{1/2}L^{-\alpha}) \Psi(N,y)^2 N^{o(1)} \\
& \le  y\(L_0N^{-\alpha} + p^{1/2}L_0^{-\alpha}\) \Psi(N,y)^2 N^{o(1)}. 
\end{align*}
Choosing $L_0 = p^{1/2(1+\alpha)} N^{\alpha/(1+\alpha)}$ to balance the two terms depending on $L_0$, we conclude that
$$U \le y^{1/2} p^{\frac{1}{4(1+\alpha)}} N^{-\frac{\alpha^2}{2(1+\alpha)}} \Psi(N,y) N^{o(1)}. $$
Hence we see from~\eqref{eq:  R and U} that 
\begin{align*}
R_{s,p}(N,y)  & \ll  \Psi(N,y) y^{1/2} p^{\frac{1}{4(1+\alpha)}} N^{-\frac{\alpha^2}{2(1+\alpha)}+o(1)}
+ p^{1/2(1+\alpha)} N^{\alpha/(1+\alpha)}\\
& =    \Psi(N,y) y^{1/2} p^{\beta} N^{-\gamma + o(1)} +    p^{2\beta} N^{\alpha -2 \gamma}. 
\end{align*}
We may assume that $p^{\beta} \le N^{\gamma}$ since otherwise the claimed bound is trivial. Using~\eqref{eq: Psi N-alpha},    we derive
$$
 p^{2\beta} N^{\alpha -2 \gamma} \le  p^{\beta} N^{\alpha -\gamma} = \Psi(N,y)p^{\beta} N^{-\gamma + o(1)}, 
$$
and the result follows.

\end{document}